\theoremstyle{plain}
\newtheorem{thm}{Theorem}[section]
\newtheorem{lem}[thm]{Lemma}
\newtheorem{prop}[thm]{Proposition}
\theoremstyle{definition}
\theoremstyle{remark}
\newtheorem{rem}[thm]{Remark}
\title{Generalisation of Hajek's stochastic comparison results to stochastic sums}
\author{J\"org Kampen }
\begin{document}

\maketitle

\begin{abstract} 
Hajek's stochastic comparison result is generalised to multivariate stochastic sum processes with univariate convex data functions  and for univariate monoton nondecreasing convex data functions for processes with and without drift respectively. The univariate result is recovered.
\end{abstract}


2010 Mathematics Subject Classification 60H10. 
\section{Statement of results}

Applications of the extension of Hajek's results to stochastic sums were described in \cite{KAB} and \cite{KPP}, but a full proof was not given in these notes. Here we give a short complete proof of related results. Hajek's results are recovered by the different proof method. In the following $C({\mathbb R})$ denotes the functions space of continuous functions on the field of real numbers ${\mathbb R}$, $W$ denotes a standard $N$-dimensional Brownian motion, and $E^x$ denotes the expectation of a process starting at $x\in {\mathbb R}^n$. Furthermore, for a ${\mathbb R}^n$-valued process $(X(t))_{0\leq t\leq T}$  the $i$th component of this process is denoted by $X_i(t)$.
For processes without drift we prove
\begin{thm}\label{main1}
Let $T>0$, $f\in C({\mathbb R})$ be convex, and assume that $f$ satisfies an exponential growth condition. Assume that $c_i>0$ are some positive real constants for $1\leq i\leq n$. Furthermore, let $X,Y$ be semimartingales with $x=X(0)=Y(0)$, where
\begin{equation}
X(t)=X(0)+\int_0^t\sigma\left(X(s)\right)dW(s),
\end{equation}
\begin{equation}
Y(t)=Y(0)+\int_0^t\rho\left(Y(s)\right)dW(s),
\end{equation}
with $n\times n$-matrixvalued bounded Lipschitz-continuous functions $x\rightarrow \sigma\sigma^T(x)$ and $y\rightarrow \rho\rho^T$. If $\sigma\sigma^T\leq \rho\rho^T$, then for $0\leq t\leq T$ we have
\begin{equation}
E^x\left(f\left(\sum_{i=1}^nc_i X_i(t)\right) \right)\leq E^x\left(f\left(\sum_{i=1}^nc_i Y_i(t)\right) \right)
\end{equation}
Here, we say that $\sigma\sigma^T\leq \rho\rho^T$ if for all $x\in {\mathbb R}^n$ the matrix $\sigma\sigma^T(x)-\rho\rho^T(x)$ is positive.
\end{thm}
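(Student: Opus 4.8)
The plan is to linearise the problem and turn it into a single deterministic parabolic comparison driven by the convexity of the data. Writing $c=(c_1,\dots,c_n)^T$ and $g(z):=f(c^Tz)$ for $z\in\mathbb{R}^n$, the function $g$ is convex on $\mathbb{R}^n$ as the composition of the convex $f$ with the linear form $z\mapsto c^Tz$, and it inherits the exponential growth of $f$. Since $\sum_{i=1}^n c_iX_i(t)=c^TX(t)$ and likewise for $Y$, the assertion is exactly $E^x\big(g(X(t))\big)\le E^x\big(g(Y(t))\big)$. Fix $t\in(0,T]$ and introduce the value function $v$ of the dominant diffusion, i.e. the solution on $[0,t]\times\mathbb{R}^n$ of the backward Kolmogorov equation
\begin{equation}
\partial_s v+\tfrac12\operatorname{tr}\!\big(\rho\rho^T(x)\,D^2_xv\big)=0,\qquad v(t,\cdot)=g,
\end{equation}
so that $v(s,x)=E\big(g(Y(t))\mid Y(s)=x\big)$ and in particular $v(0,x)=E^x\big(g(Y(t))\big)$.

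Next I would apply It\^o's formula to $s\mapsto v(s,X(s))$ on $[0,t]$. Because $X$ has no drift and $d\langle X_i,X_j\rangle_s=(\sigma\sigma^T)_{ij}(X(s))\,ds$, the bounded-variation part of $dv(s,X(s))$ equals $\big[\partial_s v+\tfrac12\operatorname{tr}(\sigma\sigma^T D^2_x v)\big](s,X(s))\,ds$; substituting the equation for $v$ collapses this to $\tfrac12\operatorname{tr}\big((\sigma\sigma^T-\rho\rho^T)(X(s))\,D^2_x v(s,X(s))\big)\,ds$. Taking expectations (the stochastic integral being a genuine martingale, see below) yields
\begin{equation}
E^x\big(g(X(t))\big)-E^x\big(g(Y(t))\big)=\tfrac12\,E^x\!\int_0^t\operatorname{tr}\big((\sigma\sigma^T-\rho\rho^T)(X(s))\,D^2_x v(s,X(s))\big)\,ds .
\end{equation}
The hypothesis $\sigma\sigma^T\le\rho\rho^T$ means $A:=\sigma\sigma^T-\rho\rho^T\le0$ pointwise, and for symmetric $A\le0$, $H\ge0$ one has $\operatorname{tr}(AH)=\operatorname{tr}(H^{1/2}AH^{1/2})\le0$. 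Hence the theorem follows once $D^2_x v(s,\cdot)$ is positive semidefinite, that is, once $v(s,\cdot)$ is convex for every $s\in[0,t]$.

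Establishing this convexity is the crux, and I expect it to be the main obstacle. In one spatial dimension it is clean: differentiating the equation twice in $x$ shows that $w:=\partial_x^2v$ solves the linear backward parabolic equation $\partial_s w+(\rho^2)_x\,\partial_x w+\tfrac12\rho^2\,\partial_x^2 w+\tfrac12(\rho^2)_{xx}\,w=0$ with terminal datum $w(t,\cdot)=g''\ge0$; its Feynman--Kac representation carries a strictly positive exponential weight coming from the zeroth-order coefficient $\tfrac12(\rho^2)_{xx}$, so $w\ge0$ propagates backward to every $s\in[0,t]$, which already recovers Hajek's univariate result. In the multivariate case the analogous differentiation of $D^2_x v$ couples all entries of the Hessian through terms of the form $(D^2_\xi(\rho\rho^T)_{ij})\,\partial_{ij}v$, so a one-direction maximum principle no longer closes. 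The plan is to control this coupling either by a systems (vector-valued) maximum principle for the matrix $D^2_x v$, exploiting that only the contraction against the single negative-semidefinite field $A$ is actually required, or by approximating $\rho\rho^T$ by coefficient fields for which convexity propagation is known and passing to the limit; this is where the real work lies.

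Finally, the remaining hypotheses are used to make the above rigorous. I would first regularise by replacing $\sigma\sigma^T,\rho\rho^T$ with $\sigma\sigma^T+\varepsilon I,\rho\rho^T+\varepsilon I$ to gain uniform ellipticity — which preserves the ordering $A\le0$ — and mollify $f$, so that $v$ is a classical $C^{1,2}$ solution and It\^o's formula applies; then let $\varepsilon\downarrow0$ and undo the mollification by dominated convergence. The exponential growth of $f$ together with the boundedness of $\sigma\sigma^T,\rho\rho^T$ yields uniform exponential moment bounds $E^x\big(\exp(\lambda|X(s)|)\big)<\infty$ for the driftless, bounded-diffusion processes $X,Y$, which guarantees integrability of $g(X(t)),g(Y(t))$ and certifies that the local martingale $\int_0^s\nabla_x v(r,X(r))^T\sigma(X(r))\,dW(r)$ is a true martingale, so that taking expectations in the It\^o expansion is legitimate.
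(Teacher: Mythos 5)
Your reduction is sound and in fact mirrors the paper's architecture: both arguments express the difference of the two value functions as an integral of $\operatorname{tr}\bigl((\sigma\sigma^T-\rho\rho^T)\,D^2_xv\bigr)$ against a nonnegative kernel (you via It\^o's formula, the paper via the Duhamel representation of $\delta v=v^1-v^2$ using the fundamental solution), so that everything hinges on the Hessian $D^2_xv(s,\cdot)$ of one value function being positive semidefinite. But you explicitly leave that step open --- ``this is where the real work lies'' --- and the alternatives you sketch (a systems maximum principle for the full Hessian, or approximation by coefficient fields ``for which convexity propagation is known'') are not a proof. Indeed, as you yourself observe and as the paper stresses at the start of its proof, convexity propagation fails for general multivariate convex data, so any successful argument must exploit the special structure $g(z)=f(c^Tz)$; your proposal never does. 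This is a genuine gap: without the Hessian positivity the trace inequality has nothing to act on.

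The paper's way of closing that gap is the one idea your proposal is missing. Since the representation $v(t,x)=\int f\bigl(\sum_k c_ky_k\bigr)p(t,x;0,y)\,dy$ is not a convolution, one cannot move $x$-derivatives onto the data directly; instead one uses the adjoint fundamental solution $p^*$ and the identity $p_{,i,j}(t,x;0,y)=p^*_{,i,j}(0,y;t,x)$ (Lemma \ref{lem1}), which converts second derivatives in the forward variable $x$ into second derivatives in the backward variable $y$. Two integrations by parts in $y$ then give
\begin{equation}
v_{,i,j}(t,x)=c_ic_j\int_{{\mathbb R}^n}f''\Bigl(\sum_{k}c_ky_k\Bigr)p^*(0,y;t,x)\,dy\;\ge\;0,
\end{equation}
i.e.\ $D^2_xv(t,x)$ is a nonnegative multiple of the rank-one matrix $cc^T$ --- precisely where the univariate convexity of $f$ and the linearity of $z\mapsto c^Tz$ enter, and why the multivariate counterexamples are avoided. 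With this in hand your trace argument closes (for symmetric $A\le0$ and $H\ge0$ one indeed has $\operatorname{tr}(AH)\le0$), and your regularisation and integrability remarks amount to the same bookkeeping the paper performs with its $\epsilon,R$ approximations. If you supply the adjoint identity and the integration by parts, your proof is complete; as written, it is not.
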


For processes with drift we prove
\begin{thm}\label{main2}
Let $T>0$, $f\in C({\mathbb R})$ be nondecreasing, convex, and assume that $f$ satisfies an exponential growth condition. Assume that $c_i>0$ are some positive real constants for $1\leq i\leq n$. Furthermore, let $X,Y$ be semimartingales with $x=X(0)=Y(0)$, where
\begin{equation}
X(t)=X(0)+\int_0^t\mu(X(s))ds+\int_0^t\sigma\left(X(s)\right)dW(s),
\end{equation}
\begin{equation}
Y(t)=Y(0)+\int_0^t\nu(Y(s))ds+\int_0^t\rho\left(Y(s)\right)dW(s),
\end{equation}
with bounded Lipshitz-continuous drift functions $\mu\leq \nu$ and $n\times n$-matrix-valued bounded Lipschitz-continuous functions $x\rightarrow \sigma\sigma^T(x)$ and $y\rightarrow \rho\rho^T$. If $\mu\leq \nu$ and $\sigma\sigma^T\leq \rho\rho^T$, then for $0\leq t\leq T$ we have
\begin{equation}
E^x\left(f\left(\sum_{i=1}^nc_iX_i(t)\right) \right)\leq E^x\left(f\left(\sum_{i=1}^nc_iY_i(t)\right) \right)
\end{equation}
Here, we say that $\mu\leq \nu$ if for all $x\in {\mathbb R}^n$ the difference $\mu(x)-\nu(x)$ is positive for each component.
\end{thm}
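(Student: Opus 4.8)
The plan is to run a Feynman--Kac / propagation-of-convexity argument that reduces the $n$-dimensional comparison to a sign condition on a single ``difference operator.'' First I would fix $t\in[0,T]$ and introduce the value function of the more volatile process $Y$,
\[
v(s,y)=E\Big[f\Big(\sum_{i=1}^n c_i\,Y_i(t)\Big)\,\Big|\,Y(s)=y\Big],\qquad 0\le s\le t,\ y\in{\mathbb R}^n,
\]
which, by the Markov property and Feynman--Kac, solves the backward Kolmogorov equation $\partial_s v+L_Y v=0$ on $[0,t)\times{\mathbb R}^n$ with terminal datum $v(t,y)=f(\sum_i c_i y_i)$, where
\[
L_Y=\sum_{i=1}^n \nu_i(y)\partial_{y_i}+\tfrac12\sum_{i,j=1}^n (\rho\rho^T)_{ij}(y)\,\partial_{y_i}\partial_{y_j}.
\]
Since Lipschitz coefficients and a merely continuous $f$ need not yield a classical $C^{1,2}$ solution, I would first mollify $\mu,\nu,\sigma\sigma^T,\rho\rho^T$ and $f$ to smooth data (retaining the orderings, the convexity and monotonicity of $f$, and the exponential growth), prove the estimate for the regularised problem, and recover the general statement by passing to the limit; the exponential growth condition supplies the uniform integrability needed both for this limit and for the martingale step below.

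Next I would apply It\^o's formula to $s\mapsto v(s,X(s))$ along the \emph{less} volatile process $X$. Writing $Dv$ for the gradient and $D^2v$ for the Hessian in $y$, the drift part of $dv(s,X(s))$ is $\big(\partial_s v+\sum_i\mu_i\partial_{y_i}v+\tfrac12\sum_{ij}(\sigma\sigma^T)_{ij}\partial_{y_i}\partial_{y_j}v\big)(s,X(s))\,ds$; adding and subtracting $L_Y v$ and using $\partial_s v+L_Y v=0$ collapses this to the difference operator, so that
\[
dv(s,X(s))=\Big[(\mu-\nu)(X(s))\cdot Dv+\tfrac12\operatorname{tr}\big((\sigma\sigma^T-\rho\rho^T)(X(s))\,D^2v\big)\Big]ds+dM_s,
\]
with $M$ a local martingale, which boundedness of the coefficients together with the exponential growth of $f$ promotes to a true martingale. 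Since $v(t,X(t))=f(\sum_i c_iX_i(t))$ while $v(0,x)=E^x[f(\sum_i c_iY_i(t))]$, taking expectations yields the exact representation
\[
E\Big[f\Big(\sum_i c_iX_i(t)\Big)\Big]-E\Big[f\Big(\sum_i c_iY_i(t)\Big)\Big]=E\!\int_0^t\!\Big[(\mu-\nu)(X)\cdot Dv+\tfrac12\operatorname{tr}\big((\sigma\sigma^T-\rho\rho^T)(X)\,D^2v\big)\Big]ds,
\]
so it suffices to show the integrand is almost surely nonpositive.

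The two terms are governed by two qualitative properties of $v$. For the diffusion term I would use that the ordering of the diffusion matrices makes $A:=(\sigma\sigma^T-\rho\rho^T)(y)$ negative semidefinite; if in addition $D^2v\succeq0$, i.e. $v(s,\cdot)$ is convex, then $\operatorname{tr}(A\,D^2v)=\operatorname{tr}\big((D^2v)^{1/2}A(D^2v)^{1/2}\big)\le0$, because a negative semidefinite matrix has nonpositive trace. For the drift term I would use that the ordering of the drifts makes $\nu-\mu\ge0$ componentwise, so this term is nonpositive as soon as $Dv\ge0$ componentwise, i.e. $v(s,\cdot)$ is nondecreasing in each coordinate. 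Everything thus reduces to two propagation statements for the semigroup of $Y$: convexity of $y\mapsto f(\sum_i c_iy_i)$ is preserved (needed already in the driftless Theorem~\ref{main1}), and, for the drift case, coordinatewise monotonicity is preserved. This is exactly where the hypotheses that $f$ be nondecreasing and that $c_i>0$ enter, and why monotonicity of $f$ is superfluous in Theorem~\ref{main1}.

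The main obstacle is establishing these preservation properties, since for a genuinely $n$-dimensional diffusion neither is automatic. I would attack them through the stochastic flow $y\mapsto Y^{s,y}(t)$ of the regularised SDE. Differentiating $v(s,y)=E[f(\sum_i c_iY^{s,y}_i(t))]$ gives, with $\Phi:=\sum_i c_iY_i^{s,y}(t)$,
\[
\partial_{y_j}v=E\Big[f'(\Phi)\sum_i c_i\,\partial_{y_j}Y_i^{s,y}(t)\Big],
\]
\[
\partial_{y_k}\partial_{y_j}v=E\Big[f''(\Phi)\Big(\sum_i c_i\partial_{y_j}Y_i\Big)\Big(\sum_i c_i\partial_{y_k}Y_i\Big)+f'(\Phi)\sum_i c_i\,\partial_{y_k}\partial_{y_j}Y_i\Big],
\]
where the first variation $J=(\partial_{y_j}Y_i)$ and the second variation $(\partial_{y_k}\partial_{y_j}Y_i)$ solve the linear matrix SDEs obtained by differentiating the flow. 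In the Hessian the $f''$-term is a nonnegative multiple of an outer product, hence automatically positive semidefinite; the delicate term is the $f'$-weighted second variation, whose sign is not free. Likewise monotonicity preservation reduces to $c^T J(t)\ge0$ componentwise. These sign properties of the variation processes are the substantive content of the theorem: they hold trivially for affine coefficients (the second variation vanishes and $J$ keeps the sign forced by $c_i>0$), and in the general Lipschitz case they are precisely the structural compatibility one must extract from the ordering hypotheses. Establishing them, and then letting the mollification parameter tend to zero using the exponential growth bound, completes the argument.
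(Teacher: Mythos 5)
Your overall reduction is the right one and in fact parallels the paper's: the paper likewise represents the difference of the two value functions by a Duhamel formula against the fundamental solution $p^{w^2}$, with integrand $\sum_{ij}(a^{w^2}_{ij}-a^{w^1}_{ij})w^1_{,i,j}+\sum_i(b^{w^2}_i-b^{w^1}_i)w^1_{,i}$, which is exactly the PDE counterpart of your It\^o identity along $X$; the sign of that integrand is then governed by the same two propagation properties you isolate, namely convexity and coordinatewise monotonicity of the value function of the dominating process. The mollification of coefficients and data and the final limit passage you describe also match the paper's scaffolding.

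The gap is that you stop exactly at the substantive step. You yourself note that in the flow computation the $f'$-weighted second-variation term has no a priori sign, and you offer no mechanism to control it; for a genuinely nonlinear multidimensional diffusion the second variation of the flow does not vanish, and propagation of convexity fails in general for multivariate convex data, so no soft argument will close this. As it stands your proof is complete only for affine coefficients. The paper's device --- and the reason the theorem is restricted to univariate $f$ composed with the linear functional $\sum_i c_i y_i$ --- is the adjoint fundamental solution of Lemma \ref{lem1}: one rewrites $v_{,i,j}(t,x)=\int f\left(\sum_k c_k y_k\right)p_{,i,j}(t,x;0,y)\,dy$ as an integral of the data against derivatives of the adjoint kernel in the backward variable $y$ and then integrates by parts onto the data, obtaining $v_{,i,j}(t,x)=\int c_ic_j\,f''\left(\sum_k c_k y_k\right)p^*(0,y;t,x)\,dy\geq 0$, and analogously $v_{,i}(t,x)=\int c_i\,f'\left(\sum_k c_k y_k\right)p^*(0,y;t,x)\,dy\geq 0$ for the drift term; positivity of $p^*$ together with $f''\geq 0$, $f'\geq 0$ and $c_i>0$ then does all the work, with no flow derivatives at all. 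You need this (or an equivalent device exploiting the one-dimensionality of the data) to make your two preservation claims into theorems rather than hypotheses.
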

\begin{rem}
Bounded Lipschitz-continuity, i.e., the condition that for some $C>0$
\begin{equation}
|b(x)+|\sigma\sigma^T(x)|\leq C,~|b(x)-b(y)|+|\sigma(x)-\sigma(y)|\leq C|x-y|
\end{equation}
holds for all $x,y\in {\mathbb R}^n$ implies existence of a $t$-continuous solution in stochastic $L^2$ sense of $X$. Similarly for $Y$. Proofs are based on a generalisation of OD-proofs to infinite dimensional function spaces and can be found in elementary standard textbooks such as \cite{O}.
\end{rem}

\section{Proof of Theorem \ref{main1} }

We first remark that the intial data function has to be univariate - for a general multivariate data function $f$ the results do not hold, because simple examples show that convexity can be strongly violated in this genera situation.
Since classical representations of the value functions in terms of the probability density (fundamental solution) are not convolutions we use the adjoint of the fundamental solution. For this and other technical reasons we need some more regularity of the data function and the the diffusion matrix $\sigma\sigma^T$ in order to to treat the problem on an analytical level. We shall observe then that the pointwise result is preserved as we consider certain data and coefficient function limits reducing the regularity assumptions.
First we need some regularity assumptions which ensure that the fundamental solution and the adjoint fundamental solution existence in a classical sense, i.e. have pointwise well-defined spatial derivatives up to second order and a pointwise well defined partial time derivative up to first order (in the domain where it is continuous). For  the sake of possible generalisations in the next section we consider the more general operator
\begin{equation}
L\equiv\frac{\partial}{\partial t}-\sum_{i,j=1}^na_{ij}\frac{\partial^2 }{\partial xi\partial x_j}-\sum_{i=1}^nb_i\frac{\partial}{\partial x_i}-c.
\end{equation}
We include even the potential term coefficients $c$ because such a coefficient appears in the adjoint even f $c=0$. Recall that the adjoint operator is given by
\begin{equation}
L^*\equiv-\frac{\partial}{\partial t}+\sum_{i,j=1}^na^*_{ij}\frac{\partial^2 }{\partial x_i\partial x_j}+\sum_{i=1}^nb^*_i\frac{\partial}{\partial x_i}+c^*,
\end{equation}
where
\begin{equation}
a^*_{ij}=a_{ij},~b_i^*=2\sum_{j=1}^n a_{ij,j}-b_i,~c^*=c+\sum_{i,j=1}^na_{ij.,i,j}-\sum_{i=1}^nb_{i,i}.
\end{equation}
In this section we shallassume that $b_i\equiv 0$ and $c\equiv 0$. Note that even in this restrictive situation we have $b^*_i\neq 0$ and $c^*\neq 0$. For our purposes it suffices to assume that the coefficients are of spatial dependence (the generalisation to additional time dependence is straightforward). In order that the adjoint exists in a strong sense we should have bounded continuous derivatives.

We assume  
\begin{itemize}
\item[i)]\begin{equation}\label{c1}
\forall~1\leq i,j\leq n~~a_{ij}\in C^2\cap H^2,
\end{equation}
where $C^m\equiv C^m\left({\mathbb R}^n\right) $ denotes the space of real-valued twice continuously differentiable functions and $H^m$ denotes the standard Sobolev space of order $m\geq 0$. In the next section we assume in a addition that $b_i\in C^1\cap H^1$ for all $1\leq i\leq 1$. For the folowing considerations concerning the adjoint we assume that $c\in C^0\cap L^2$ if a potential coefficient is considered. 
\item[ii)] we have uniform ellipticity, i.e., there exists $0<\lambda <\Lambda <\infty$ such that
\begin{equation}\label{c2}
\forall x,y\in {\mathbb R}^n:~\lambda |y|^2\leq \sum_{i,j=1}^na_{ij}(x)x_ix_j \leq \Lambda |y|^2.
\end{equation}
\end{itemize}
We use one observation concerning the adjoint
\begin{lem}\label{lem1}
Assume that the conditions (\ref{c1}) and (\ref{c2}) above hold and let $p$ be the fundamental solution of
\begin{equation}
Lp=0,
\end{equation}
and $p*$ be the fundamental solution of
\begin{equation}
L^*p^*=0.
\end{equation}
Then for $s<t$ and $x,y\in {\mathbb R}^n$ $p,p^*$ have spatial derivatives up to order $2$
\begin{equation}
\begin{array}{ll}
p(t,x;s,y)=p^*(s,y;t,x),~p_{,i}(t,x;s,y)=p^*_{,i}(s,y;t,x),\\
\\
~p_{,i,j}(t,x;s,y)=p^*_{,i,j}(s,y;t,x).
\end{array}
\end{equation}
Here, for $x=(x_1,\cdots,x_n)$ and $e_i=(e_{i1},\cdots, e_{in})$ along with $e_{ij}=\delta_{ij}$ (Kronecker $\delta$) and $s<t$, $x,y\in {\mathbb R}^n$ we denote
\begin{equation}
p_{,i}(t,x;s,y)=\lim_{h\downarrow 0}\frac{p(t,x+he_i;s,y)-p(t,x;s,y)}{h},
\end{equation}
\begin{equation}
p_{,i,j}(t,x;s,y)=\lim_{h\downarrow 0}\frac{p_{,i}(t,x+he_j;s,y)-p_{,i}(t,x;s,y)}{h},
\end{equation}
\begin{equation}
p^*_{,i}(t,x;s,y)=\lim_{h\downarrow 0}\frac{p^*(s,y+he_i;t,x)-p^*(s,y;t,x)}{h},
\end{equation}
\begin{equation}
p^*_{,i,j}(t,x;s,y)=\lim_{h\downarrow 0}\frac{p^*_{,i}(t,s,y+he_j;t,x)-p^*_{,i}(s,y;t,x)}{h}.
\end{equation}
\end{lem}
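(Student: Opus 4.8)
The plan is to recognise this as the classical duality between the fundamental solution of a uniformly parabolic operator and the fundamental solution of its formal adjoint, and to make it rigorous under the regularity hypothesis (\ref{c1}) and the uniform ellipticity (\ref{c2}) by combining Levi's parametrix construction with a Green-type (Lagrange) identity. Both sides of each claimed equation will first be shown to exist as genuine (pointwise, classically differentiable) objects, after which the identities are read off by concentrating the two fundamental solutions against their respective delta initial data.

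First I would establish existence, uniqueness and pointwise regularity of both $p$ and $p^*$. Since $a_{ij}\in C^2\cap H^2$, the coefficients $a_{ij}$ together with their first derivatives $a_{ij,j}$ and second derivatives $a_{ij,i,j}$ are continuous, so that the adjoint coefficients $b^*_i=2\sum_j a_{ij,j}-b_i$ and $c^*=c+\sum_{i,j}a_{ij,i,j}-\sum_i b_{i,i}$ are admissible continuous coefficients of a (time-reversed) uniformly parabolic operator. Levi's parametrix method then yields $p$ and $p^*$ as fundamental solutions possessing continuous spatial derivatives up to order two and a continuous first time derivative for $t>s$ (respectively $s<t$), together with the Gaussian bounds
\[
|D^\alpha_x p(t,x;s,y)|\leq \frac{C}{(t-s)^{(n+|\alpha|)/2}}\exp\left(-\frac{\kappa|x-y|^2}{t-s}\right),\qquad |\alpha|\leq 2,
\]
and the approximate-identity property $p(t,\cdot;s,y)\to\delta_y$ as $t\downarrow s$, with the analogous statements for $p^*$ after interchanging the roles of $(t,x)$ and $(s,y)$.

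Next I would prove the core identity $p(t,x;s,y)=p^*(s,y;t,x)$. Fix $(t,x)$ and $(s,y)$ with $s<t$, set $u(\tau,z)=p(\tau,z;s,y)$, so that $L_{(\tau,z)}u=0$ for $\tau>s$, and $v(\tau,z)=p^*(\tau,z;t,x)$, so that $L^*_{(\tau,z)}v=0$ for $\tau<t$. The pointwise Lagrange identity
\[
vLu-uL^*v=\frac{\partial}{\partial\tau}(uv)+\sum_{i=1}^n\frac{\partial}{\partial z_i}\mathcal{F}_i(u,v),
\]
with $\mathcal{F}_i$ bilinear in $u,v$ and their first spatial derivatives, exhibits the difference as a pure space-time divergence. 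Integrating over the slab $s+\varepsilon\leq\tau\leq t-\varepsilon$, $z\in{\mathbb R}^n$, the left side vanishes, the spatial flux integrates to zero by the Gaussian decay of $u,v$ and their first derivatives, and there remains
\[
\int_{{\mathbb R}^n}u(t-\varepsilon,z)v(t-\varepsilon,z)\,dz=\int_{{\mathbb R}^n}u(s+\varepsilon,z)v(s+\varepsilon,z)\,dz.
\]
Letting $\varepsilon\downarrow 0$ and using that $v(\tau,\cdot)\to\delta_x$ as $\tau\uparrow t$ while $u(\tau,\cdot)\to\delta_y$ as $\tau\downarrow s$, the left side converges to $u(t,x)=p(t,x;s,y)$ and the right side to $v(s,y)=p^*(s,y;t,x)$, which is the asserted identity.

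Finally, the relations for the first and second spatial derivatives follow by differentiating this identity in the relevant spatial variable: the Gaussian bounds of the first step show that the difference quotients defining $p_{,i},p_{,i,j}$ and the corresponding ones for $p^*$ converge uniformly on compact sets, so the defining limits may be interchanged with the equality $p(t,x;s,y)=p^*(s,y;t,x)$, giving $p_{,i}=p^*_{,i}$ and $p_{,i,j}=p^*_{,i,j}$ in the stated sense. I expect the principal obstacle to lie at the temporal endpoints $\tau\to s$ and $\tau\to t$, where the two fundamental solutions concentrate simultaneously, so that the product $uv$ must be controlled against both delta limits in a single passage to the limit; this is precisely where the off-diagonal Gaussian bounds from Levi's construction are indispensable, whereas the vanishing of the spatial flux at infinity is routine once those same bounds are in hand.
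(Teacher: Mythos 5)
Your argument is essentially the same as the paper's: the paper also fixes $q(\tau,z)=p(\tau,z;s,y)$ and $r(\tau,z)=p^*(\tau,z;t,x)$, applies Green's (Lagrange's) identity over the slab $[s+\delta,t-\delta]\times B_R$, kills the spatial flux via Gaussian bounds as $R\uparrow\infty$, and lets $\delta\downarrow 0$ so that the two fundamental solutions concentrate on their delta initial data, with the derivative identities obtained by the same differentiation-under-the-limit step you describe. Your write-up is correct and in fact spells out more carefully the endpoint concentration issue that the paper treats implicitly.
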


\begin{proof}
For $q(\tau,z)=p(\tau,z;s,y)$ and $r(\tau,z)=p^*(\tau,z;t,x)$ for $s<\tau<t$ we show that for $1\leq i,j\leq n$
\begin{equation}
q(t,x)=r(s,y),~ q_{,i}(t,x)=r_{,i}(s,y),~q_{,i,j}(t,x)=r_{,i,j}(s,y)
\end{equation}
hold. Let $B_{R}$ be the ball of radius $R$ around zero. As $s<t$ there exists $\delta>0$ such that $s+\delta<t-\delta$ and using Green's identity, Gaussian upper bounds of the fundamental solution and its first order spatial derivatives, $Lq=0$ and $L^*r=0$ we get
\begin{equation}
\begin{array}{ll}
0=\lim_{R\uparrow \infty}\int_{s+\delta}^{t-\delta}\int_{B_{R}}
\frac{\partial }{\partial \tau}(qr)(\tau,z)d\tau dz\\
\\
=\int_{{\mathbb R}^n}q(t-\delta,z)p^*(t-\delta,z;t,x))dz-\int_{{\mathbb R}^n}r(t+\delta,z)p(t+\delta,z;s,y))dz.
\end{array}
\end{equation}
This leads to the identities
\begin{equation}
\int_{{\mathbb R}^n}q_{,i}(t-\delta,z)p^*(t-\delta,z;t,x))dz=\int_{{\mathbb R}^n}r_{,i}(t+\delta,z)p(t+\delta,z;s,y))dz,
\end{equation}
and
\begin{equation}
\int_{{\mathbb R}^n}q_{,i,j}(t-\delta,z)p^*(t-\delta,z;t,x))dz=\int_{{\mathbb R}^n}r_{,i,j}(t+\delta,z)p(t+\delta,z;s,y))dz,
\end{equation}
In the limit $\delta \downarrow 0$ we get the relations stated. 
\end{proof}

For technical reasons we need more approximations concerning the data. As we are aiming at a pointwise comparison result, and we  have Gaussian upper bounds  it suffices to consider approximating data which are regular convex in a core region and decay to zero at spatial infinity. We have

\begin{prop}\label{prop1}
Let $f\in C({\mathbb R})$ be a real-valued continuous convex function. Let $B_R\subset {\mathbb R}^n$ be the ball of finite radius $R$ around the origin. Then there is a function $f^{\epsilon}_R\in C^2\cap H^2$ such that
\begin{itemize}
 \item[i)] \begin{equation}\label{d1}
            \forall x\in B_R~f(x)=|f(x)-f^{\epsilon}_R(x)|\leq \epsilon;
           \end{equation}
\item[ii)] the second (classically well-defined) derivative is strictly positive,i.e.,
\begin{equation}\label{d2}
\forall x\in B_R~f{''}(x)>0. 
\end{equation}
\end{itemize}

\end{prop}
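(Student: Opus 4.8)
The plan is to build $f^{\epsilon}_R$ in three stages arranged so that all three requirements hold simultaneously on the core ball $B_R$: first smooth $f$ by convolution, then perturb it slightly to force the second derivative to be strictly positive, and finally localise by a cutoff so that the result decays at infinity and lands in $H^2$. First I would mollify. Let $\phi_\delta$ be a standard nonnegative $C^\infty$ mollifier supported in the ball of radius $\delta$ with total mass one, and set $f_\delta = f * \phi_\delta$. Since $f$ is continuous and convex, $f_\delta \in C^\infty$ is again convex (so $f_\delta'' \geq 0$) as a nonnegative average of translates of $f$, and $f_\delta \to f$ uniformly on the compact set $\overline{B_R}$ as $\delta \downarrow 0$. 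Hence one can fix $\delta = \delta(\epsilon)$ with $\sup_{x \in B_R} |f_\delta(x) - f(x)| \leq \epsilon/2$.

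Second, I would enforce strict convexity. Convolution only gives $f_\delta'' \geq 0$, which may vanish, e.g. wherever $f$ is affine. I would therefore add a small quadratic bump: fix $q$ with $q'' \equiv 2$ (namely $q(x)=x^2$ in the scalar variable, or $q(x)=|x|^2$ with Hessian $2I$ in the multivariate reading) and set $g_\eta = f_\delta + \eta q$. Then $g_\eta'' = f_\delta'' + 2\eta \geq 2\eta > 0$ everywhere, so condition (ii) holds for $g_\eta$ on $B_R$. Choosing $\eta = \eta(\epsilon,R)$ small enough that $\eta \sup_{x \in B_R} q(x) \leq \epsilon/2$ preserves the approximation and yields $\sup_{x \in B_R}|g_\eta - f| \leq \epsilon$.

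Third, I would localise. The function $g_\eta$ is smooth and strictly convex but, like $f$ itself, does not decay at infinity and so is not in $L^2$, let alone $H^2$. I would multiply by a smooth cutoff: pick $R' > R$ and $\chi \in C^\infty_c$ with $0 \leq \chi \leq 1$, $\chi \equiv 1$ on $\overline{B_R}$ and $\operatorname{supp}\chi \subset B_{R'}$, and define $f^{\epsilon}_R := g_\eta\,\chi$. Then $f^{\epsilon}_R$ is $C^\infty$ with compact support, hence lies in $C^2 \cap H^2$. Since $\chi \equiv 1$ on $B_R$, there $f^{\epsilon}_R = g_\eta$, so both the $\epsilon$-approximation (i) and the strict positivity of the second derivative (ii) are inherited directly from the second step.

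The only genuine tension is between the $H^2$ requirement, which demands decay at spatial infinity, and convexity, which is incompatible with decay for a globally convex function; the main point to be careful about is therefore the localisation. Cutting off necessarily destroys convexity outside $B_R$, but since the proposition asks for strict convexity only on the core region this causes no conflict, and the cutoff being identically $1$ on $B_R$ guarantees that localisation disturbs neither the approximation nor the second-derivative condition there. All remaining estimates, namely local uniform convergence of the mollifications, boundedness of $q$ and its derivatives on the compact support, and $H^2$-membership of a compactly supported $C^2$ function, are routine.
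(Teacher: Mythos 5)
Your construction is correct and complete, but it takes a different route from the paper. The paper does not really prove Proposition \ref{prop1} at all: it disposes of it in one sentence by appealing to the regular polynomial interpolation scheme of \cite{KN}, together with the fact that a convex function has classical second derivatives almost everywhere. Your three-stage argument --- mollification to get smoothness and preserve convexity, addition of a small quadratic $\eta q$ to turn $f''\geq 0$ into $f''\geq 2\eta>0$, and a cutoff $\chi\equiv 1$ on $\overline{B_R}$ to secure membership in $H^2$ --- is self-contained, avoids any appeal to a.e.\ second differentiability or to interpolation error estimates, and is the standard way such approximation lemmas are proved. Each step is sound: convolution with a nonnegative mollifier is a nonnegative average of translates and hence preserves convexity and converges uniformly on compacta; the quadratic perturbation costs at most $\eta\sup_{B_R}q$ in the sup norm; and the cutoff leaves both properties untouched on $B_R$ while placing the result in $C^\infty_c\subset C^2\cap H^2$. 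You also correctly identify the one real tension in the statement (global convexity versus $H^2$ decay) and resolve it the only way it can be resolved, namely by demanding convexity only on the core region --- which is exactly how the paper uses the proposition, since the positivity of the Hessian of $v^{\epsilon,R}$ is only claimed on a smaller ball $B_r$ for $R$ large. The one caveat worth recording is that the proposition as printed is garbled (condition (i) should read $|f(x)-f^{\epsilon}_R(x)|\leq\epsilon$ and condition (ii) should refer to $(f^{\epsilon}_R)''$, not $f''$); you have implicitly corrected both, which is the right reading given how the proposition is applied in \eqref{ver}.
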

Proposition \ref{prop1} can be proved by using regular polynomial interpolation as considered in \cite{KN} (for example). Here it can be used that classical derivatives of second order exist for the convex continuous function $f$ almost everywhere. 
The function $f^{\epsilon, R}$ is not convex in general of course, but it is convex in a core region $B_R(x)$.
For all $\epsilon >0$ and all $R>0$ using \ref{lem1} we get
\begin{equation}\label{ver}
\begin{array}{ll}
v^{\epsilon,R}_{,i,j}(t,x)=\int_{{\mathbb R}^n}f^{\epsilon}_R\left(\sum_{i=1}^nc_iy_i\right)p_{,i,j}(t,x;0,y)dy\\
\\
= \int_{{\mathbb R}^n}f^{\epsilon}_R\left(\sum_{i=1}^nc_iy_i\right)p^*_{,i,j}(0,y;t,x)dy\\
\\
= \int_{{\mathbb R}^n}c_ic_j\left( f^{\epsilon}_R\right) ^{''}\left(\sum_{i=1}^nc_iy_i\right)p^*(0,y;t,x)dy
\end{array}
\end{equation}
Here for a univariate function $g\in C^2$ the symbol $g^{''}$ denotes its second derivative.
Since $f^{\epsilon}(z)>0$ for all $z\in B_R(z)$, and $p^*\geq 0$, and by the standard Gaussian estimate
\begin{equation}
{\big |}p^*(\sigma,\eta;\tau,\xi){\big |}\leq \frac{C^*}{\sqrt{\tau-\sigma}^n}\exp\left( -\lambda^*\frac{|\eta-\xi|^2}{\tau-\sigma}\right) 
\end{equation}
for some finite constants $C^*,\lambda^*$ we get from (\ref{ver})
\begin{equation}
\forall r>0~\forall x\in B_r~\exists R_0>r~\forall ~R\geq R_0~  \left( v^{\epsilon,R}_{,i,j}(t,x)\right)\geq 0,
\end{equation}
i.e. the Hessian is positive in a smaller core region $B_r=\left\lbrace x||x|\leq r \right\rbrace$ for $R$ large enough. Furthermore, classical regularity theory tells us that
\begin{equation}
\forall \epsilon >0~\forall R>0~v^{\epsilon,R}(t,.)\in C^{2},~v^{\epsilon}(t,.)\in C^{2},
\end{equation}
where $v^{\epsilon}(t,.)=\lim_{R\uparrow \infty}v^{\epsilon,R}(t,.)$.
It follows that 
\begin{equation}
\forall~x\in {\mathbb R}^n~\forall~t\in [0,T]~\exists R_0>0~\forall R\geq R_0~\forall~\epsilon>0~~ \mbox{Tr}A(x)D^2v^{\epsilon, R}(t,x)\geq 0,
\end{equation}
 where $A(x)=(a_{ij}(x))$ is the coefficient matrix and $D^2v^{\epsilon R}(t,x)$ is the Hessian of $v^{\epsilon R}$ evaluated at $x$. Hence,
\begin{equation}
\forall~x\in {\mathbb R}^n~\forall~t\in [0,T]~~\forall~\epsilon>0~~ \mbox{Tr}A(x)D^2v^{\epsilon}(t,x)\geq 0,
\end{equation} 
and as the $\epsilon\downarrow 0$ limit of the Hessian is well defined for $t\in (0,T]$ we get
\begin{equation}
\forall~x\in {\mathbb R}^n~\forall~t\in (0,T]~~\forall~\epsilon>0~~ \mbox{Tr}A(x)D^2v(t,x)\geq 0.
\end{equation}
Now consider matrices $(a^{v_1}_{ij})$ and $(a^{v_2}_{ij})$ where $v^1$ and $v^2$ solve
\begin{equation}
\frac{\partial v^1}{\partial t}-\sum_{ij}a^{v^1}_{ij}\frac{\partial v^1}{\partial x_i\partial x_j}=0,~
\frac{\partial v^2}{\partial t}-\sum_{ij}a^{v^2}_{ij}\frac{\partial v^2}{\partial x_i\partial x_j}=0,
\end{equation}
and $v^1(0,.)=v^2(0,.)$.
Note that $\delta v=v^1-v^2$ satisfies
\begin{equation}
\frac{\partial \delta v(t,x)}{\partial t}=\sum_{ij}\left(a^{v^2}_{ij}-a^{v^1}_{ij}\right)\frac{\partial v^1}{\partial x_i\partial x_j}+\sum_{ij}a^{v^2}_{ij}\frac{\partial^2 \delta v}{\partial x_i\partial x_j},
\end{equation}
where $\delta v(0,x)=0$ for all $x\in {\mathbb R}^n$.
We have the classical representation
\begin{equation}
\delta v(t,x)=\int_0^t\int_{{\mathbb R}^n}\sum_{ij}\left(a^{v^2}_{ij}-a^{v^2}_{ij}\right)(s,y)\frac{\partial^2 v^1}{\partial x_i\partial x_j}(s,y)p^{v^2}(t,x,s,y)dsdy,
\end{equation}
where $p^{v^2}$ is the fundamental solution of
\begin{equation}
\frac{\partial \delta v(t,x)}{\partial t}-\sum_{ij}a^{v^2}_{ij}\frac{\partial^2 \delta v}{\partial x_i\partial x_j}=0.
\end{equation}
As $\sum_{ij}\left(a^{v^2}_{ij}-a^{v^1}_{ij}\right)(s,y)\frac{\partial v^1}{\partial x_i\partial x_j}(s,y)\geq 0$ and $p^{v_2}(t,x,s,y)\geq 0$ we conclude that $\delta v^2\geq 0$.
Now we have proved the main theorem for $a_{ij}\in C^2\cap H^2$. Next, for each $\epsilon >0$ and $R>0$ there exists a matrix $(a^{\epsilon,R}_{ij})$ with components in $C^2\cap H^2$, where for all $x\in {\mathbb R}^n$
\begin{equation}
a^{\epsilon,R}(x)=\sigma^{\epsilon,R}\sigma^{\epsilon,R,T}(x),
\end{equation}
with $\sigma^{\epsilon,R,T}(x)$ the transpose of $\sigma^{\epsilon,R}$, and where
\begin{equation}
\sup_{x\in B_R}{\big |}\sigma^{\epsilon,R}(x)-\sigma(x){\big |}\leq \epsilon.
\end{equation}
here $\sigma$ is the original dispersion matrix related to the process $X$ of the main theorem (which is assumed to be bounded and Lipschitz continuous). 
\begin{equation}
X^{\epsilon, R}(t)=X(0)+
\int_0^t\sigma^{\epsilon,R}
\left(X^{\epsilon,R}(s)\right)dW(s),
\end{equation}
For a $\rho^{\epsilon,R}(x)$ which satisfies analogous conditions we define
\begin{equation}
Y^{\epsilon,R}(t)=Y(0)+\int_0^t\rho^{\epsilon,R}\left(Y^{\epsilon,R}(s)\right)dW(s),
\end{equation}
Then the preceding argument together with Feynman-Kac formalism shows that for $\sigma^{\epsilon,T}\sigma^{\epsilon,R,T}\leq \rho^{\epsilon,R}\rho^{\epsilon,R,T}$, then for $0\leq t\leq T$ we have
\begin{equation}
\begin{array}{ll}
\forall \epsilon >0~\forall r>0~\forall x\in B_r~\exists R_0~\forall R\geq R_0\\
\\
~E^x\left(f^{\epsilon,R}\left(\sum_{i=1}^nc_iX^{\epsilon,R}_i(t)\right) \right)\leq E^x\left(f^{\epsilon,R}\left(\sum_{i=1}^nc_iY^{\epsilon,R}_i(t)\right) \right)
\end{array}
\end{equation}
This leads to
\begin{equation}\label{xr}
\begin{array}{ll}
\forall r>0~\forall x\in B_r~\exists R_0~\forall R\geq R_0\\
\\
~E^x\left(f^{R}\left(\sum_{i=1}^nc_iX^{R}_i(t)\right) \right)\leq E^x\left(f^{R}\left(\sum_{i=1}^nc_iY^{R}_i(t)\right) \right)
\end{array}
\end{equation}
where $X^R$ are processes 
\begin{equation}
X^{\epsilon, R}(t)=X(0)+
\int_0^t\sigma^{R}
\left(X^{R}(s)\right)dW(s),
\end{equation}
with a bounded continuous $\sigma^R$ which satisfies
\begin{equation}
\forall~x\in B_R~\sigma^{R}(x)=\sigma(x)=0.
\end{equation}
The process $Y^R$ is defined analogously. Similarly $f^R$ is a limit of functions $f^{\epsilon,R}\in C^2\cap H^2$ which equals $f$ on $B_R$. 
In \ref{xr} $X^R$ can be replaced by $X$ and $Y^R$ by $Y$ by the probability law of the processes, and a limit consideration for data which equal for each $R$ the function $f$ on $B_R$ leads to the statement of the theorem by an uniform exponential bound of the data functions, the boundedness of the Lipshitz-continuous coefficients and the Gaussian law of the Brownian motion.

\section{Additional note for the proof of Theorem \ref{main2}}

If $w^1$ and $w^2$ solve
\begin{equation}
\begin{array}{ll}
\frac{\partial w^1}{\partial t}-\sum_{ij}a^{w^1}_{ij}\frac{\partial w^1}{\partial x_i\partial x_j}+\sum_i b^{w^1}_i(x)\frac{\partial w^1}{\partial x_i}=0,~\\
\\
\frac{\partial w^2}{\partial t}-\sum_{ij}a^{w^2}_{ij}\frac{\partial w^2}{\partial x_i\partial x_j}+\sum_ib^{w^2}_i(x)\frac{\partial w^2}{\partial x_i}=0,
\end{array}
\end{equation}
and $w^1(0,.)=w^2(0,.)$.
Note that $\delta w=w^1-w^2$ satisfies
\begin{equation}
\begin{array}{ll}
\frac{\partial \delta w(t,x)}{\partial t}=\sum_{ij}\left(a^{w^2}_{ij}-a^{w^1}_{ij}\right)\frac{\partial w^1}{\partial x_i\partial x_j}+\sum_{ij}a^{w^2}_{ij}\frac{\partial \delta w}{\partial x_i\partial x_j}-b^{w^2}_i\frac{\partial \delta w}{\partial x_i}\\
\\
+\sum_i (b^{w^2}_i(x)-b^{w^1}_i(x))\frac{\partial w^1}{\partial x_i},
\end{array}
\end{equation}
where $\delta w(0,x)=0$ for all $x\in {\mathbb R}^n$.
\begin{equation}
\begin{array}{ll}
\delta w(t,x)=\int_0^t\int_{{\mathbb R}^n}{\Big (}
\sum_{ij}\left(a^{w^2}_{ij}-a^{w^1}_{ij}\right)(s,y) w^1_{,i,j}(s,y)+\\
\\
+\sum_i(b^{w^2}_i(x)-b^{w^1}_i(x))\frac{\partial w^1}{\partial x_i}{\Big )}
p^{w^2}(t,x,s,y)dsdy,
\end{array}
\end{equation}
where $p^{w^2}$ is the fundamental solution of
\begin{equation}
\frac{\partial \delta w}{\partial t}-\sum_{ij}a^{w^2}_{ij}\frac{\partial \delta w}{\partial x_i\partial x_j}+b^{w^2}_i\frac{\partial \delta w}{\partial x_i}=0
\end{equation}

As $\sum_{ij}\left(a^{w^2}_{ij}-a^{w^1}_{ij}\right)(s,y)\frac{\partial w^1}{\partial x_i\partial x_j}(s,y)\geq 0$ and $p^{w_2}(t,x,s,y)\geq 0$ we conclude that $\delta v^2\geq 0$ if $\sum_i(b^{w^2}_i(x)-b^{w^1}_i(x))\frac{\partial w^1}{\partial x_i}{\Big )}$. As $b^{w^2}_i(x)-b^{w^1}_i(x)\geq 0$ for all $x$ this condition reduces to the monotonicity condition $\frac{\partial w^1}{\partial x_i}\geq 0$. The truth of the latter monotonicity condition for the value function $w^1$ can be proved using the adjoint using the same trick as in the preceding section. 

\begin{rem}
These notes are from my Lecture notes \begin{center}'Die Fundamentall\"{o}sung parabolischer Gleichungen und schwache Schemata h\"{o}herer Ordnung f\"{u}r stochastische Diffusionsprozesse'
\end{center} 
of WS 2005/2006 in Heidelberg, which are not published. The argument given there is published now upon request, as research is going on concerning applications of comparison principles. Originally the relevance of stochastic comparison results was pointed out to me by P. Laurence and V. Henderson.
The article \cite{KN} was never submitted to a journal and is only available on arXiv. The main theorems proved here are stated essentially in the conference notes in \cite{KAB} and \cite{KPP} but not strictly proved there. In these notes applications to American options and to passport options are considered. For example explicit solutions for optimal strategies related to the optimal control problem of passport options and the dependence of that strategy on correlations between assets can be obtained. The proof given here can be applied in the univariate case as well and recovers the result of Hajek in \cite{H}.       
\end{rem}

\end{document}